\definecolor{webgreen}{rgb}{0,.5,0}
\definecolor{webbrown}{rgb}{.6,0,0}
\newcommand{\seqnum}[1]{\href{https://oeis.org/#1}{\rm \underline{#1}}}
\newcommand{\Lar}{0.5}
\begin{document}

\theoremstyle{plain}
\newtheorem{theorem}{Theorem}
\newtheorem{corollary}[theorem]{Corollary}
\newtheorem{lemma}[theorem]{Lemma}
\newtheorem{proposition}[theorem]{Proposition}

\theoremstyle{definition}
\newtheorem{definition}[theorem]{Definition}
\newtheorem{definitions}[theorem]{Definitions}
\newtheorem{example}[theorem]{Examples}
\newtheorem{conjectures}[theorem]{Conjectures}

\theoremstyle{remark}
\newtheorem{remark}[theorem]{Remark}
\newtheorem{question}[theorem]{Question}

%alternate title: When Catalan Numbers Encounter Crossroads: Revealing Two New Integer Sequences

\begin{center}
\vskip 1cm{\LARGE\bf 
Two New Integer Sequences Related to Crossroads and Catalan Numbers \\
\vskip .1in
}
\vskip 1cm
\large
Julien Rouyer and Alain Ninet\\
CReSTIC and LMR\\
Universit\'e de Reims Champagne-Ardenne \\
UFR Sciences Exactes et Naturelles\\
Chemin des Rouliers\\
51100 Reims \\
France \\
\href{mailto: julien.rouyer@univ-reims.fr}{\tt julien.rouyer@univ-reims.fr}\\
\href{mailto: alain.ninet@univ-reims.fr}{\tt alain.ninet@univ-reims.fr}\\
%\emph{First version Submitted to the Journal of Integer Sequences on November 11, 2023.}
\emph{Second version Submitted to the Journal of Integer Sequences on February 12, 2024.}
\end{center}

\vskip .2 in

\begin{abstract}
 The {\it marriageable singles} sequence represents the number of noncrossing partitions of the finite set $\{1,\ldots,n\}$ in which some pair of singleton blocks can be joined while remaining noncrossing.
 The {\it lonely  singles} sequence represents the number of all the other noncrossing partitions of the finite set $\{1,\ldots,n\}$ and is the difference between the Catalan numbers sequence and the marriageable singles sequence. 
 The 14 first terms of these sequences are given, as well as some of their properties.
 These sequences appear when one wants to count the number of ways to cross simultaneously certain road intersections.
\end{abstract}

\section{Introduction}

The number of noncrossing partitions of the finite set $\{1,\ldots,n\}$ (with $n$ any positive integer) is very well known to be the Catalan number $C_n$. See, for example, Stanley \cite[entry~159, p.\ 43]{Stanley2015} and Roman \cite[pp.\ 51--60]{Roman2015} for a quick introduction to Catalan numbers and noncrossing partitions.
Noncrossing partitions have been hugely studied, since at least Becker \cite{Becker1952}, where they are called \emph{planar rhyme schemes} but their systematic study began with Kreweras \cite{Kreweras1972} and Poupard \cite{Poupard1972}. Simion \cite{Simion2000} presents a summary of related results available in 2000 and some further work can be found in McCammond \cite{Mccammond2006}, Callan \cite{Callan2008}  and Kim \cite{Kim2011}.

The study of combinatorial properties of crossroads led us to determine the number of noncrossing partitions such that no pair of singleton blocks $\{i\}$ and $\{j\}$ (with $i\neq j$) can be merged into the pair $\{i,j\}$ while the partition remains noncrossing.
These noncrossing partitions appear when one wants to determine the number of possible manners to cross simultaneously a road intersection in which entries and exits are alternated, with the constraint that U-turns are prohibited. For a quick introduction to road intersection crossing management for intelligent vehicles, see Rouyer et al.\ \cite{Rouyer2022} and Bai et al.\ \cite{Bai2021}. 

In what follows and the next section, we give definitions and quick examples. We then proove some properties of both marriageable singles and lonely singles sequences. At the end, we give the first values of those sequences and formulate several conjectures concerning them.

\begin{definition}
For all $n\in\mathbb{N}$, $[n]$ denotes the $n$-set $\{1,\dots,n\}$. In particular, $[0]=\emptyset$.
\end{definition}

\begin{definitions}
Let $n$ be a non-negative integer and let $\pi=\{A_1,\dots,A_k\}$ be a partition of $[n]$ (i.e., $\cup_{i=1}^k A_i=[n]$ and $\forall\, 1\leq i< j\leq k, A_i\cap A_j=\emptyset$ and $\forall\, 1\leq i\leq k, A_i\neq\emptyset$), this partition $\pi$ is said to be a
\begin{itemize}
\item crossing partition if
$$\exists\, 1\leq i\leq k, \exists\, 1\leq j\leq k,  i\neq j, \exists\, a<b\in A_i, \exists\, c<d\in A_j, a<c<b<d,$$
\item noncrossing partition  if
\begin{align*}
\forall\, 1\leq i<j\leq k, \forall\, a<b\in A_i, \forall\, c<d\in A_j,\ & \phantom{\text{ or }} a<b<c<d,\\
 & \text{ or } c<d<a<b,\\
 & \text{ or } a<c<d<b,\\
 & \text{ or } c<a<b<d.
\end{align*}
\end{itemize}
\end{definitions}

\begin{definitions} 
A noncrossing partition $\pi$ of $[n]$ is called \emph{marriageable singles partition} if there exists a pair of singleton blocks $\{i\}$ and $\{j\}$ in $\pi$ that can be joined while remaining noncrossing.

More precisely, let $\pi=\{\,A_1,\dots,A_k\,\}$ be a noncrossing partition of $[n]$ with at least two singleton blocks $A_1=\{i\}$ and $A_2=\{j\}$. Then $\pi$ is a marriageable singles partition iif $\pi'=\{\,\{i,j\},A_3,\dots,A_k\,\}$ is a noncrossing partition of $[n]$.

Conversely, a noncrossing partition of $[n]$ is called \emph{lonely singles partition} if it is not a marriageable singles partition.

Let $M_n$ and $L_n$ be the number of marriageable singles and lonely singles partitions of $[n]$ respectively. The \emph{marriageable singles} and \emph{lonely singles sequences} are $(M_n)_{n \geq 0}$ and $(L_n)_{n \geq 0}$ respectively.
\label{def:LnMn}
\end{definitions}

\begin{remark}
A partition that contains at most one singleton block is clearly a lonely singles partition. In an equivalent way, a marriageable singles partition contains at least two singleton blocks. 
\end{remark}

\begin{example}
The set $[4]$ has $5$ marriageable singles partitions which are
\begin{align*}
\{\,\{1\}, \{2\}, \{3\}, \{4\}\,\} &, &\{\,\{1\}, \{2\}, \{3,4\}\,\} &, &\{\,\{1\}, \{2,3\}, \{4\}\,\}, \\
\{\,\{1,4\}, \{2\}, \{3\}\,\} &, &\{\,\{1,2\}, \{3\}, \{4\}\,\}&. &
\end{align*}

For example, $\pi=\{\,\{1,2\}, \{3\}, \{4\}\,\}$ is a marriageable singles partition because the singleton blocks $\{3\}$ and $\{4\}$ can be joined to give the noncrossing partition $\pi'=\{\,\{1,2\}, \{3,4\}\,\}$.

The set $[4]$ has $9$ lonely singles partitions which are
\begin{align*}
\{\,\{1, 2, 3, 4\}\,\}&, & \{\,\{1, 2, 3\}, \{4\}\,\}&, & \{\,\{1, 2, 4\}, \{3\}\,\}, \\ 
\{\,\{1, 3, 4\}, \{2\}\,\}&, & \{\,\{1\}, \{2, 3, 4\}\,\}&, & \{\,\{1, 2\}, \{3, 4\}\,\},\\
\{\,\{1, 4\}, \{2, 3\}\,\}&, & \{\,\{1, 3\}, \{2\}, \{4\}\,\}&, & \{\,\{1\}, \{2, 4\}, \{3\}\,\}. 
\end{align*}
The first seven have no pair of singleton blocks and are clearly not marriageable singles partitions. The last two have only one pair of singleton blocks, but after merging it, they both give $\{\,\{1,3\}, \{2, 4\}\,\}$ which is a crossing partition.
\end{example}

\begin{lemma}
Let $C_n$ (for $n\geq 0$) denote the number of noncrossing partitions of $[n]$. Then, we have $C_n=L_n+M_n$.
\label{lemma_somme}
\end{lemma}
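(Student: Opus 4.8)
The plan is to observe that this identity is a direct consequence of the definitions, so the proof reduces to a clean counting-by-partition-of-the-sample-space argument. First I would let $\mathcal{N}_n$ denote the set of all noncrossing partitions of $[n]$, so that $|\mathcal{N}_n| = C_n$ by the classical result recalled in the introduction. I would then write $\mathcal{M}_n \subseteq \mathcal{N}_n$ for the set of marriageable singles partitions and $\mathcal{L}_n \subseteq \mathcal{N}_n$ for the set of lonely singles partitions, so that $M_n = |\mathcal{M}_n|$ and $L_n = |\mathcal{L}_n|$.

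The key step is to verify that $\mathcal{M}_n$ and $\mathcal{L}_n$ form a partition of $\mathcal{N}_n$, i.e., $\mathcal{N}_n = \mathcal{M}_n \sqcup \mathcal{L}_n$ as a disjoint union. This follows at once from Definitions~\ref{def:LnMn}: a lonely singles partition is defined to be exactly a noncrossing partition that fails to be a marriageable singles partition. Hence every element of $\mathcal{N}_n$ lies in precisely one of the two families—those admitting a pair of singleton blocks that can be merged while staying noncrossing, and those admitting no such pair—so that $\mathcal{M}_n \cap \mathcal{L}_n = \emptyset$ and $\mathcal{M}_n \cup \mathcal{L}_n = \mathcal{N}_n$. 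I would conclude by additivity of cardinality over a disjoint union of finite sets, giving $C_n = |\mathcal{N}_n| = |\mathcal{M}_n| + |\mathcal{L}_n| = M_n + L_n$.

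I do not expect any genuine obstacle here, as the statement is essentially a bookkeeping identity encoding the complementary definition of the two families. The only point worth recording explicitly is that a marriageable singles partition is itself noncrossing—this is built into its definition, since one starts from a noncrossing $\pi$—so that both families genuinely sit inside $\mathcal{N}_n$ and the dichotomy is simultaneously exhaustive and mutually exclusive.
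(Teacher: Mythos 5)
Your proof is correct and follows exactly the same approach as the paper's: both observe that, by Definitions~\ref{def:LnMn}, the set of noncrossing partitions of $[n]$ is the disjoint union of the marriageable singles and lonely singles partitions, and conclude by additivity of cardinality. Your version merely spells out in more detail what the paper treats as immediate.
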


\begin{proof}
As the set of noncrossing partitions is the disjoint union of the sets of lonely singles and marriageable singles partitions, the result follows immediately.
\end{proof}

\begin{remark}
It is well known that $C_n=\frac{1}{n+1}\binom{2n}{n}$ is the $n$th Catalan number. The sequence $(C_n)$ is referenced as \seqnum{A000108} in the On-Line Encyclopedia of Integer Sequences \cite{oeis}.
\end{remark}

\begin{remark}
The unique partition of $[0]=\emptyset$ is the empty partition $\emptyset$. This partition is noncrossing (the first Catalan number is $C_0=1$) and it is a lonely singles partition.
\end{remark}

\section{Standard road intersection}

Noncrossing complete matchings of $2n$ points lying on a line and noncrossing complete set of chords are two of the many standard combinatorial objects counted by the Catalan numbers. Noncrossing complete matchings and noncrossing complete set of chords are in natural bijection with the noncrossing partitions, as explained by Stanley \cite[entries 59 and 61, p.\ 28]{Stanley2015}. 

We introduce here some notions on road intersections corresponding to noncrossing sets of chords.

\begin{definition}
A road intersection with $n$ entries and $n$ exits alternated is called a {\it standard road intersection of size} $n$. 

Let $E_1,\dots,E_n$ denote the entries and $X_1,\dots,X_n$ denote the exits of a standard road intersection of size $n$. These entries and exits are numbered clockwise ({\bf Figure} \ref{fig:inter} gives such a representation of a standard road intersection of size $n=4$).

We represent graphically a way to cross simultaneously a standard road intersection by a bipartite graph (see {\bf Figures} \ref{fig:cercle} and \ref{fig:cerclenonmax} for two examples with $n=4$). In these graphs, black vertices represent entries and white vertices represent exits. Each edge represents the crossing of the intersection by a vehicle going from an entry to an exit.

Each entry can be connected to each exit, unless restrictions are indicated.
\end{definition}

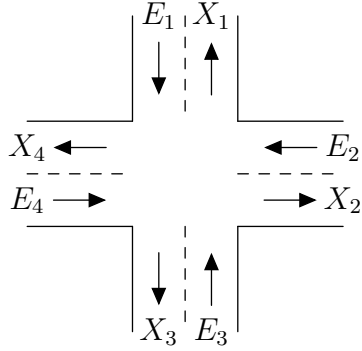
\begin{figure}
\centering
\begin{tikzpicture}[line cap=round,line join=round,>=triangle 45,x=0.35cm,y=0.35cm]
\clip(2,-3) rectangle (16,11);
\draw [line width=\Lar pt] (3,6)-- (7,6);
\draw [line width=\Lar pt] (3,2)-- (7,2);
\draw [line width=\Lar pt] (7,6)-- (7,10);
\draw [line width=\Lar pt] (11,6)-- (11,10);
\draw [line width=\Lar pt] (7,2)-- (7,-2);
\draw [line width=\Lar pt] (11,2)-- (11,-2);
\draw [line width=\Lar pt] (11,6)-- (15,6);
\draw [line width=\Lar pt] (11,2)-- (15,2);
\draw [line width=\Lar pt,dash pattern=on 4pt off 4pt] (3,4)-- (7,4);
\draw [line width=\Lar pt,dash pattern=on 4pt off 4pt] (11,4)-- (15,4);
\draw [line width=\Lar pt,dash pattern=on 4pt off 4pt] (9,10)-- (9,6);
\draw [line width=\Lar pt,dash pattern=on 4pt off 4pt] (9,2)-- (9,-2);
\draw [->,line width=\Lar pt] (4,3) -- (6,3);
\draw [->,line width=\Lar pt] (12,3) -- (14,3);
\draw [->,line width=\Lar pt] (14,5) -- (12,5);
\draw [->,line width=\Lar pt] (6,5) -- (4,5);
\draw [->,line width=\Lar pt] (10,7) -- (10,9);
\draw [->,line width=\Lar pt] (8,9) -- (8,7);
\draw [->,line width=\Lar pt] (8,1) -- (8,-1);
\draw [->,line width=\Lar pt] (10,-1) -- (10,1);
\draw[color=black] (8,10) node {$E_1$};
\draw[color=black] (10,10) node {$X_1$};
\draw[color=black] (15,5) node {$E_2$};
\draw[color=black] (15,3) node {$X_2$};
\draw[color=black] (10,-2) node {$E_3$};
\draw[color=black] (8,-2) node {$X_3$};
\draw[color=black] (3,3) node {$E_4$};
\draw[color=black] (3,5) node {$X_4$};
\end{tikzpicture}
\caption{A Standard Road Intersection of size $n=4$.}
\label{fig:inter}
\end{figure}

\begin{definition}
For a given standard road intersection, any edge starting from one entry $E_i$ and ending to one exit $X_j$ is a \emph{lane}. $E_iX_j$ denotes such a lane.
\end{definition}

\begin{definition}
For a given standard road intersection, any lane of the type $E_iX_i$, i.e., any edge starting from one entry $E_i$ and ending to exit $X_i$ is a \emph{U-turn}.
\end{definition}

\begin{definition}
A standard road intersection of size $n$ where U-turns are forbidden is a \emph{restricted standard road intersection}.
\end{definition}

\begin{definition}
For a given standard road intersection, a \emph{Maximal Set of Lanes}, abbreviated \emph{MSL}, is a set of noncrossing lanes (i.e., each pair of lanes have no common point) such that any additional lane would cross at least one of them (i.e., would have a common point with at least one of them).
\end{definition}

\begin{remark}
An MSL corresponds to a set of $n$ nonintersecting chords (or a noncrossing complete matching on $2n$ vertices), see Stanley \cite[entries 59 and 61, p.\ 28]{Stanley2015}. Three examples of MSL are given by {\bf Figures} \ref{fig:cercle}, \ref{fig:cerclenonmax} and \ref{fig:cerclenonmaxmod}.
\end{remark}

\begin{definition}
For a given standard road intersection, an \emph{MSL} is said to be \emph{absolute} when it does not contain two U-turns $E_iX_i$ and $E_jX_j$ (with $i\neq j$) that can be changed into two lanes $E_iX_j$ and $E_jX_i$ to give another MSL.
\label{def:abs}
\end{definition}

\begin{figure}
    \begin{minipage}[c]{.46\linewidth}
\centering
\begin{tikzpicture}[line cap=round,line join=round,>=triangle 45,x=0.5cm,y=0.5cm]
\clip(-4,-4) rectangle (4,4);
\draw [line width=10pt] (0,0) circle (6cm);
\draw (-1,3)--(-1,-2.83);
\draw (1,-3)--(2.87,-1.13);
\draw (3,1)--(1.13,2.87);
\draw (-3,-1)--(-3,0.83);
\begin{scriptsize}
\fill [black] (-1,3) circle (2.5pt);
\draw[black] (-1,3.7) node {$E_1$};
\draw [black] (1,3) circle (2.5pt);
\draw[black] (1,3.7) node {$X_1$};
\fill [black] (3,1) circle (2.5pt);
\draw[black] (3.7,1) node {$E_2$};
\draw [black] (3,-1) circle (2.5pt);
\draw[black] (3.7,-1) node {$X_2$};
\fill [black] (1,-3) circle (2.5pt);
\draw[black] (1,-3.7) node {$E_3$};
\draw [black] (-1,-3) circle (2.5pt);
\draw[black] (-1,-3.7) node {$X_3$};
\fill [black] (-3,-1) circle (2.5pt);
\draw[black] (-3.7,-1) node {$E_4$};
\draw [black] (-3,1) circle (2.5pt);
\draw[black] (-3.7,1) node {$X_4$};
\end{scriptsize}
\end{tikzpicture}
\caption{A bipartite graph associated with the intersection represented in Figure \ref{fig:inter}, with an example of absolute MSL corresponding to the noncrossing partition $\{\,\{1,2,3\},\{4\}\,\}$.}
\label{fig:cercle}
    \end{minipage}\hfill
    \begin{minipage}[c]{.46\linewidth}
\centering
\begin{tikzpicture}[line cap=round,line join=round,>=triangle 45,x=0.5cm,y=0.5cm]
\clip(-4,-4) rectangle (4,4);
\draw [line width=10pt] (0,0) circle (6cm);
\draw (-1,3)--(2.85,-0.85);
\draw (3,1)--(1.13,2.87);
\draw (-0.83,-3)--(1,-3);
\draw (-3,-1)--(-3,0.83);
\begin{scriptsize}
\fill [color=black] (-1,3) circle (2.5pt);
\draw[color=black] (-1,3.7) node {$E_1$};
\draw [color=black] (1,3) circle (2.5pt);
\draw[color=black] (1,3.7) node {$X_1$};
\fill [color=black] (3,1) circle (2.5pt);
\draw[color=black] (3.7,1) node {$E_2$};
\draw [color=black] (3,-1) circle (2.5pt);
\draw[color=black] (3.7,-1) node {$X_2$};
\fill [color=black] (1,-3) circle (2.5pt);
\draw[color=black] (1,-3.7) node {$E_3$};
\draw [color=black] (-1,-3) circle (2.5pt);
\draw[color=black] (-1,-3.7) node {$X_3$};
\fill [color=black] (-3,-1) circle (2.5pt);
\draw[color=black] (-3.7,-1) node {$E_4$};
\draw [color=black] (-3,1) circle (2.5pt);
\draw[color=black] (-3.7,1) node {$X_4$};
\end{scriptsize}
\end{tikzpicture}
\caption{A bipartite graph associated with the intersection represented in Figure \ref{fig:inter}, with an example of a nonabsolute MSL corresponding to the noncrossing partition $\{\,\{1,2\},\{3\},\{4\}\,\}$.}
\label{fig:cerclenonmax}
    \end{minipage}\hfill
\end{figure}

\begin{figure}
\begin{minipage}[c]{.46\linewidth}
\centering
\begin{tikzpicture}[line cap=round,line join=round,>=triangle 45,x=0.5cm,y=0.5cm]
\clip(-4,-4) rectangle (4,4);
\draw [line width=10pt] (0,0) circle (6cm);
\draw (0,3)--(0,-3);
\draw (0,-3)--(3,0);
\draw (3,0)--(0,3);
\draw (-3,0)--(-3,0);
\begin{scriptsize}
\fill [color=black] (0,3) circle (2.5pt);
\draw[color=black] (0,3.7) node {$1$};
\fill [color=black] (3,0) circle (2.5pt);
\draw[color=black] (3.7,0) node {$2$};
\fill [color=black] (0,-3) circle (2.5pt);
\draw[color=black] (0,-3.7) node {$3$};
\fill [color=black] (-3,0) circle (2.5pt);
\draw[color=black] (-3.7,0) node {$4$};
\end{scriptsize}
\end{tikzpicture}
\caption{Simplified graph of Figure \ref{fig:cercle}, showing explicitly the noncrossing lonely singles partition $\{\,\{1,2,3\},\{4\}\,\}$.}
\label{fig:cerclesimp}
    \end{minipage}\hfill
    \begin{minipage}[c]{.46\linewidth}
\centering
\begin{tikzpicture}[line cap=round,line join=round,>=triangle 45,x=0.5cm,y=0.5cm]
\clip(-4,-4) rectangle (4,4);
\draw [line width=10pt] (0,0) circle (6cm);
\draw (3,0)--(0,3);
\begin{scriptsize}
\fill [color=black] (0,3) circle (2.5pt);
\draw[color=black] (0,3.7) node {$1$};
\fill [color=black] (3,0) circle (2.5pt);
\draw[color=black] (3.7,0) node {$2$};
\fill [color=black] (0,-3) circle (2.5pt);
\draw[color=black] (0,-3.7) node {$3$};
\fill [color=black] (-3,0) circle (2.5pt);
\draw[color=black] (-3.7,0) node {$4$};
\end{scriptsize}
\end{tikzpicture}
\caption{Simplified graph of Figure \ref{fig:cerclenonmax}, showing explicitly the noncrossing marriageable singles partition $\{\,\{1,2\},\{3\},\{4\}\,\}$.}
\label{fig:cerclenonmaxsimp}
    \end{minipage}
\end{figure}

\begin{figure}
    \begin{minipage}[c]{.46\linewidth}
\centering
\begin{tikzpicture}[line cap=round,line join=round,>=triangle 45,x=0.5cm,y=0.5cm]
\clip(-4,-4) rectangle (4,4);
\draw [line width=10pt] (0,0) circle (6cm);
\draw (-1,3)--(2.85,-0.85);
\draw (3,1)--(1.13,2.87);
\draw (-3,-1)--(-1.13,-2.87);
\draw (-2.87,0.83)--(1,-3);
\begin{scriptsize}
\fill [color=black] (-1,3) circle (2.5pt);
\draw[color=black] (-1,3.7) node {$E_1$};
\draw [color=black] (1,3) circle (2.5pt);
\draw[color=black] (1,3.7) node {$X_1$};
\fill [color=black] (3,1) circle (2.5pt);
\draw[color=black] (3.7,1) node {$E_2$};
\draw [color=black] (3,-1) circle (2.5pt);
\draw[color=black] (3.7,-1) node {$X_2$};
\fill [color=black] (1,-3) circle (2.5pt);
\draw[color=black] (1,-3.7) node {$E_3$};
\draw [color=black] (-1,-3) circle (2.5pt);
\draw[color=black] (-1,-3.7) node {$X_3$};
\fill [color=black] (-3,-1) circle (2.5pt);
\draw[color=black] (-3.7,-1) node {$E_4$};
\draw [color=black] (-3,1) circle (2.5pt);
\draw[color=black] (-3.7,1) node {$X_4$};
\end{scriptsize}
\end{tikzpicture}
\caption{Modification of Figure \ref{fig:cerclenonmax} to obtain an absolute MSL corresponding to the noncrossing partition $\{\,\{1,2\},\{3,4\}\,\}$.}
\label{fig:cerclenonmaxmod}
    \end{minipage}\hfill
    \begin{minipage}[c]{.46\linewidth}
\centering
\begin{tikzpicture}[line cap=round,line join=round,>=triangle 45,x=0.5cm,y=0.5cm]
\clip(-4,-4) rectangle (4,4);
\draw [line width=10pt] (0,0) circle (6cm);
\draw (3,0)--(0,3);
\draw (-3,0)--(0,-3);
\begin{scriptsize}
\fill [color=black] (0,3) circle (2.5pt);
\draw[color=black] (0,3.7) node {$1$};
\fill [color=black] (3,0) circle (2.5pt);
\draw[color=black] (3.7,0) node {$2$};
\fill [color=black] (0,-3) circle (2.5pt);
\draw[color=black] (0,-3.7) node {$3$};
\fill [color=black] (-3,0) circle (2.5pt);
\draw[color=black] (-3.7,0) node {$4$};
\end{scriptsize}
\end{tikzpicture}
\caption{Simplified graph of Figure \ref{fig:cerclenonmaxmod}, showing explicitly the noncrossing lonely singles partition $\{\,\{1,2\},\{3,4\}\,\}$.}
\label{fig:cerclenonmaxmodsimp}
    \end{minipage}
\end{figure}

\begin{lemma}\label{lem:catalan}
For a given standard road intersection of size $n$, the set of MSL is in one-to-one correspondence with the set of noncrossing partitions of $[n]$. The number of MSL of a standard road intersection of size $n$ is equal to the Catalan number $C_n$.
\end{lemma}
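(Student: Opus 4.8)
The plan is to exhibit an explicit bijection between the MSLs of a standard road intersection of size $n$ and the noncrossing partitions of $[n]$, and then to count. I place the $2n$ vertices on the circle in their clockwise order, so that $E_k$ occupies position $2k-1$ and $X_k$ occupies position $2k$; entries are then exactly the odd positions and exits the even positions. The first step is to identify MSLs with noncrossing perfect matchings of these $2n$ vertices. By the remark following the definition of an MSL, a maximal noncrossing family of lanes is a set of $n$ pairwise disjoint chords, i.e., a noncrossing perfect matching. Conversely, any noncrossing perfect matching is maximal, since every vertex is already used and a further lane would share an endpoint with an existing one; and it consists only of lanes, because in a noncrossing matching each chord joins two vertices with an even number of vertices strictly between them on either side, so their positions differ by an odd number and have opposite parity---one entry, one exit. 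Thus the MSLs are precisely the noncrossing perfect matchings of the $2n$ vertices.

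Second, I would describe the bijection $\Phi$ with noncrossing partitions, the one illustrated in Figures \ref{fig:cercle}--\ref{fig:cerclenonmaxmodsimp}. Given a noncrossing partition $\pi$ of $[n]$, for each block $B=\{b_1<b_2<\dots<b_m\}$ of $\pi$ I include the $m$ lanes $E_{b_1}X_{b_m}$ and $E_{b_t}X_{b_{t-1}}$ for $2\le t\le m$; equivalently, the lane leaving $E_{b_t}$ reaches the exit of the predecessor of $b_t$ in the cyclic order of $B$, and a singleton block $\{i\}$ contributes the U-turn $E_iX_i$. In the reverse direction, a noncrossing matching determines the permutation $\sigma$ of $[n]$ with $E_i$ matched to $X_{\sigma(i)}$, and the associated partition is the partition of $[n]$ into the supports of the cycles of $\sigma$.

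Third, I would verify that $\Phi$ is a well-defined map onto the MSLs. Each $E_i$ and each $X_i$ occurs in exactly one lane of $\Phi(\pi)$, so $\Phi(\pi)$ is a perfect matching. Within a single block $B$ the chords are the outer lane $E_{b_1}X_{b_m}$, spanning positions $2b_1-1$ to $2b_m$, together with the lanes $E_{b_t}X_{b_{t-1}}$ joining $2b_{t-1}$ to $2b_t-1$ for $2\le t\le m$; since $2b_1-1<2b_1<2b_2-1<\dots<2b_m-1<2b_m$, the latter are nested disjointly under the former, so all chords of $B$ lie in the region $R_B$ bounded by the outer lane and the arc it subtends, and are pairwise noncrossing. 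The decisive point is the interaction between blocks: two lanes from different blocks $B,B'$ cross if and only if $B$ and $B'$ interleave, i.e., there exist $a<c<b<d$ with $a,b\in B$ and $c,d\in B'$. When $\pi$ is noncrossing no two blocks interleave, the outer lanes do not cross, the regions $R_B$ are pairwise nested or interior-disjoint, and hence all lanes are mutually noncrossing; when two blocks interleave, their outer lanes already cross. Thus $\Phi(\pi)$ is an MSL precisely when $\pi$ is a noncrossing partition.

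I expect the main obstacle to be exactly this cross-block and single-cycle analysis, and the proof that $\Phi$ and the cycle map are mutually inverse: one must show that a noncrossing perfect matching restricted to the vertices of one intended block is forced into the nested ``roof'' shape above---equivalently, that a one-cycle matching is noncrossing iff $\sigma$ restricted to that cycle is the ``decreasing'' cycle $b_t\mapsto b_{t-1}$---and that distinct cycles contribute noncrossing chords precisely when their supports do not interleave. Once this is settled, $\Phi$ is a bijection between the noncrossing partitions of $[n]$ and the MSLs of a standard road intersection of size $n$; since the number of noncrossing partitions of $[n]$ is $C_n$ (as recalled in the Introduction), the number of MSLs equals $C_n$, as claimed. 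Alternatively, one may shortcut the last two steps by invoking the classical bijection between noncrossing complete matchings and noncrossing partitions recorded in Stanley \cite{Stanley2015}.
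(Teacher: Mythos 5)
Your proposal is correct, and it ends where the paper begins: the paper's entire proof is the one-sentence citation you offer as a ``shortcut'' in your final sentence --- an MSL is a set of $n$ nonintersecting chords joining $2n$ points, and the bijection with noncrossing partitions and the count $C_n$ are then quoted from Stanley \cite[entry~59, p.\ 28]{Stanley2015}. What you do differently is to unpack the content of that citation: the parity argument showing that every chord of a noncrossing perfect matching has an even number of vertices on each side, hence joins an entry to an exit (so maximal sets of lanes and noncrossing perfect matchings are the same objects), and the explicit block-to-nested-roof bijection $\Phi$, which is precisely the correspondence the paper uses implicitly in its figures (in Figure~\ref{fig:cercle} the block $\{1,2,3\}$ is realized as the lanes $E_1X_3$, $E_2X_1$, $E_3X_2$ and the singleton $\{4\}$ as the U-turn $E_4X_4$). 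Your version buys self-containedness and an explicit inverse (cycle supports of the permutation $i\mapsto\sigma(i)$); the paper's buys brevity by outsourcing everything to a classical result. Two cautions on your write-up: first, the direction ``every MSL is a perfect matching'' is the one structurally nontrivial fact, and you justify it by citing the paper's remark, which is itself an unproven assertion --- a complete argument must show that a noncrossing set of lanes leaving some entry and some exit uncovered always admits an addable lane (for instance, because uncovered entries and uncovered exits are equinumerous on the boundary of each face of the chord arrangement, and faces are convex, so two such vertices on a common face can be joined). Second, you explicitly flag but do not carry out the verification that $\Phi$ and the cycle map are mutually inverse. Both points are true and routine, and both are exactly what the Stanley citation covers, so your treatment is no less rigorous than the paper's --- but to stand alone without that citation, those two verifications would have to be completed.
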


\begin{proof}
As a MSL can be seen as a set of $n$ nonintersecting chords joining $2n$ points, this result is very well known. See Stanley \cite[entry~59, p.\ 28]{Stanley2015}.
\end{proof}

The following three corollaries are just reinterpretations of the lonely singles and marriageable singles definitions in the language of road intersections.

\begin{corollary}
$M_n$ is the number of nonabsolute MSL of a standard road intersection of size~$n$.
\end{corollary}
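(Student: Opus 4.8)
The plan is to work through the explicit bijection $\Phi$ underlying Lemma~\ref{lem:catalan} and show that it carries marriageable singles partitions exactly onto nonabsolute MSL; the count $M_n$ then transfers by Lemma~\ref{lem:catalan}. First I would make $\Phi$ concrete. Arranging the $2n$ boundary points clockwise as $E_1,X_1,E_2,X_2,\dots,E_n,X_n$, the standard noncrossing-matching bijection sends a block $\{i_1<i_2<\cdots<i_m\}$ of a noncrossing partition $\pi$ to the lanes $E_{i_1}X_{i_m},\,E_{i_2}X_{i_1},\dots,E_{i_m}X_{i_{m-1}}$; doing this over all blocks yields a noncrossing complete matching, i.e.\ an MSL, and this is precisely the correspondence asserted in Lemma~\ref{lem:catalan} (the examples in Figures~\ref{fig:cercle} and \ref{fig:cerclenonmaxmod} illustrate it).

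The two key observations I would record are local. Setting $m=1$ in the rule above shows that a singleton block $\{i\}$ is sent to the single lane $E_iX_i$, that is, to a U-turn; conversely every U-turn in $\Phi(\pi)$ comes from a singleton block, so singletons of $\pi$ and U-turns of $\Phi(\pi)$ are in exact correspondence. Setting $m=2$ shows that a two-element block $\{i,j\}$ with $i<j$ is sent to exactly the pair of lanes $E_iX_j$ and $E_jX_i$. Consequently, if $\pi$ has singleton blocks $\{i\}$ and $\{j\}$ and $\pi'$ is obtained by merging them into $\{i,j\}$ (all other blocks unchanged, as in Definition~\ref{def:LnMn}), then $\Phi(\pi')$ is obtained from $\Phi(\pi)$ by deleting the two U-turns $E_iX_i,E_jX_j$ and inserting the two lanes $E_iX_j,E_jX_i$ — precisely the swap operation of Definition~\ref{def:abs} — since no other block, and hence no other lane, is affected.

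With these in hand the equivalence is immediate. A partition $\pi$ is marriageable iff it has two singletons whose merger $\pi'$ is still noncrossing; since $\Phi$ is a bijection taking noncrossing partitions to MSL, $\pi'$ is noncrossing iff $\Phi(\pi')$ is a (noncrossing, hence automatically maximal) matching, i.e.\ another MSL. Because $\Phi(\pi')$ is exactly the result of swapping the two U-turns $E_iX_i,E_jX_j$ of $\Phi(\pi)$, this says that $\Phi(\pi)$ contains two U-turns whose swap gives another MSL, which is exactly the statement that $\Phi(\pi)$ is \emph{not} absolute. Thus $\pi$ is a marriageable singles partition iff $\Phi(\pi)$ is a nonabsolute MSL, and $\Phi$ restricts to a bijection between the two families; counting with Lemma~\ref{lem:catalan} gives that $M_n$ equals the number of nonabsolute MSL.

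The step I expect to require the most care is confirming that the U-turn swap produces exactly the matching $\Phi(\pi')$ and not some other noncrossing matching: this is what makes the two notions of ``noncrossing'' coincide on the nose rather than merely in count. I would nail it down by tracking that only the four points $E_i,X_i,E_j,X_j$ are re-matched and that the resulting pattern $E_iX_j,E_jX_i$ is verbatim the image of the block $\{i,j\}$ under $\Phi$, so injectivity of $\Phi$ forces the swapped matching to be $\Phi(\pi')$. A minor companion point is to note explicitly that ``another MSL'' in Definition~\ref{def:abs} means any noncrossing complete matching, which is maximal by the remark following the MSL definition, so the maximality clause adds no extra constraint and the two conditions match word for word.
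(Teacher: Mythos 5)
Your proof is correct and takes essentially the same approach as the paper: the paper offers no separate argument, stating only that this corollary is a reinterpretation of Definitions~\ref{def:LnMn} and \ref{def:abs} through the correspondence of Lemma~\ref{lem:catalan}, and your write-up simply makes that correspondence explicit (singleton blocks $\leftrightarrow$ U-turns, merging two singletons $\leftrightarrow$ the U-turn swap). The care you take to check that the swapped matching is exactly $\Phi(\pi')$ is a detail the paper leaves implicit, but it is the same underlying argument.
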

%\begin{proof}
%It is an immediate consequence of {\bf Definitions} \ref{def:Mn} and \ref{def:abs} and {\bf Lemma} \ref{lem:catalan}: the nonabsolute MSL are one-to-one associated with the marriageable singles partitions of $[n]$.
%\end{proof}

\begin{corollary}
$L_n$ is the number of absolute MSL of a standard road intersection of size~$n$.
\end{corollary}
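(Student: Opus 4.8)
The plan is to obtain this corollary by complementary counting, combining the three results already in hand: the bijection of Lemma~\ref{lem:catalan}, the additive decomposition of Lemma~\ref{lemma_somme}, and the preceding corollary that identifies $M_n$ with the number of nonabsolute MSL. The crucial structural remark is that Definition~\ref{def:abs} dichotomizes every MSL: a given MSL either contains a pair of U-turns $E_iX_i$, $E_jX_j$ (with $i\neq j$) that can be replaced by $E_iX_j$, $E_jX_i$ to yield another MSL, in which case it is nonabsolute, or it contains no such pair, in which case it is absolute. Thus the set of all MSL of a standard road intersection of size $n$ is the disjoint union of its absolute and its nonabsolute members.

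First I would record, via Lemma~\ref{lem:catalan}, that the total number of MSL equals $C_n$. The preceding corollary then supplies that the number of nonabsolute MSL equals $M_n$. Since absolute and nonabsolute MSL partition the full set of MSL, the number of absolute MSL equals $C_n$ minus the number of nonabsolute MSL, namely $C_n - M_n$. Finally Lemma~\ref{lemma_somme} gives $C_n - M_n = L_n$, and the identity $\#\{\text{absolute MSL}\}=L_n$ follows at once.

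The conceptual content actually sits inside the bijection and the preceding corollary rather than in this deduction, and it is worth making explicit as the alternative, direct route. Under the correspondence of Lemma~\ref{lem:catalan}, a U-turn $E_iX_i$ matches a singleton block $\{i\}$, and the swap of Definition~\ref{def:abs} replacing $E_iX_i$, $E_jX_j$ by $E_iX_j$, $E_jX_i$ matches the merge of the singletons $\{i\}$ and $\{j\}$ into the block $\{i,j\}$; the swapped configuration is again a noncrossing matching exactly when the merged partition $\pi'$ is noncrossing. Consequently an MSL is absolute if and only if its associated partition admits no pair of singletons that can be merged noncrossingly, i.e.\ if and only if it is a \emph{lonely singles} partition, so the absolute MSL are counted by $L_n$.

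In the complementary-counting route there is essentially no obstacle: it reduces to the arithmetic identity $\#\{\text{absolute MSL}\}=C_n-M_n=L_n$ and invokes no new geometry. The only genuine subtlety, which the direct route exposes, is verifying that the geometric swap and the combinatorial merge correspond under the bijection and that the noncrossing property is preserved simultaneously on both sides; but that verification belongs to the preceding corollary and need not be repeated here, so this statement follows immediately.
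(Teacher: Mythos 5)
Your proposal is correct. Note that the paper offers no formal proof at all here: it states that this corollary (like its two neighbours) is ``just a reinterpretation'' of the definitions under the bijection of Lemma~\ref{lem:catalan} --- precisely the ``direct route'' you sketch in your third paragraph, where U-turns correspond to singleton blocks and the swap of Definition~\ref{def:abs} corresponds to merging two singletons, so that absolute MSL are exactly the lonely singles partitions. Your primary argument, by contrast, is a complementary count: the definition of absoluteness dichotomizes the set of MSL, Lemma~\ref{lem:catalan} gives the total $C_n$, the preceding corollary gives $M_n$ for the nonabsolute ones, and Lemma~\ref{lemma_somme} converts $C_n-M_n$ into $L_n$. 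This is a genuinely different deduction in form: it is purely arithmetic given the earlier statements, and it cleanly isolates where the geometric content lives (in the preceding corollary, which itself rests on the same bijective dictionary). What it costs is self-containedness --- if one demands a proof of the preceding corollary, the same dictionary argument must be carried out there, so nothing is saved globally; what it buys is that this corollary then needs no geometry at all. Both routes are sound, and your writeup correctly identifies that the only nontrivial verification (swap $\leftrightarrow$ merge preserving the noncrossing property on both sides) sits in the bijection rather than in this statement.
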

%\begin{proof}
%It is an immediate consequence of {\bf Definitions} \ref{def:Ln} and \ref{def:abs} and {\bf Lemma} \ref{lem:catalan}: the absolute MSL are one-to-one associated with the lonely singles partitions of $[n]$.
%\end{proof}

\begin{corollary}
$L_n$ is the number of MSL for a restricted standard road intersection of size~$n$.
\end{corollary}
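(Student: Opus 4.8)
The plan is to construct a bijection between the absolute MSL of a standard road intersection of size $n$ and the MSL of the associated \emph{restricted} intersection; since the preceding corollary already identifies $L_n$ with the number of absolute MSL, this will give $L_n$ as the number of restricted MSL. The two maps are easy to describe: from an absolute MSL one deletes all of its U-turns, and from a restricted MSL one adds back the U-turns that complete it to a full matching. The work lies in checking that each map lands where claimed and that they are mutually inverse. Throughout I use the correspondence of Lemma \ref{lem:catalan}: under it a singleton block $\{i\}$ becomes the U-turn $E_iX_i$, a block $\{i,j\}$ becomes the pair of lanes $E_iX_j,E_jX_i$, and hence \emph{merging two singletons} is exactly the \emph{swap of two U-turns} appearing in Definition \ref{def:abs}.

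The crux is the following structural claim about a restricted MSL $R$: the entries and exits left unmatched by $R$ occur in U-turn pairs $\{E_i,X_i\}$. To prove it I would label the $2n$ points $1,\dots,2n$ clockwise, giving each entry an odd label and each exit an even one, and consider the planar graph whose edges are the lanes of $R$ together with the $2n$ unit arcs of the circle. Every lane joins an entry to an exit and every unit arc joins consecutive labels, so every edge joins an odd label to an even one; the graph is therefore bipartite and each bounded face is enclosed by an even cycle that alternates strictly between entries and exits. Consequently each face carries equally many entries as exits on its boundary, and since the matched points split into entry--exit pairs (the two ends of each bounding chord), every face carries equally many \emph{unmatched} entries as unmatched exits. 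Maximality of $R$ then finishes the argument: two unmatched points on a common face can be joined by a chord inside that face, and if one is an entry $E_i$ and the other an exit $X_j$ with $i\neq j$ this chord is a legal lane that could be added to $R$, contradicting maximality. Hence each face has at most two unmatched points, and when it has two they are an entry and an exit of equal index, i.e.\ a U-turn pair. This is the main obstacle: the bipartite alternation forcing unmatched entries and exits to balance on every face is exactly what rules out a lone unmatched point and pins the leftover gaps to genuine U-turns.

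Granting the claim, the bijection is routine. Given a restricted MSL $R$, let $S$ index its unmatched U-turn pairs; adding the U-turns $\{E_iX_i : i\in S\}$ (each joining two points of one face) produces a full noncrossing matching, hence an MSL $A$, whose U-turns are exactly those just added since $R$ has none. If $A$ were not absolute, two of its U-turns $E_iX_i,E_jX_j$ could be swapped for $E_iX_j,E_jX_i$; but then $E_iX_j$ would be a legal lane noncrossing with $R$ joining two points unmatched by $R$, contradicting maximality of $R$, so $A$ is absolute. Conversely, deleting the U-turns from an absolute MSL $A$ leaves a noncrossing set $R$ of non-U-turn lanes; any non-U-turn lane one could add to $R$ would join two endpoints of deleted U-turns, and (by the crossing computation for the four points $E_i,X_i,E_j,X_j$, where a chord of $A$ meets $E_iX_j$ exactly when it meets $E_jX_i$) its addability is equivalent to admissibility of the U-turn swap, so adding it would witness that $A$ is not absolute. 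Hence $R$ is a restricted MSL. The two constructions visibly undo one another, giving the bijection and therefore the equality of the number of restricted MSL with $L_n$.
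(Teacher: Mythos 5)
Your proof is correct, and it differs from the paper for a simple reason: the paper offers no proof of this corollary at all, declaring all three corollaries to be ``just reinterpretations'' of Definitions \ref{def:LnMn} and \ref{def:abs} in the language of road intersections. That dismissal is fair for the preceding corollary (under the correspondence of Lemma \ref{lem:catalan}, singleton blocks are U-turns and merging two singletons is exactly the U-turn swap of Definition \ref{def:abs}, so lonely singles partitions are absolute MSL by pure definition-chasing), but for the present corollary something genuinely has to be checked, and you isolated exactly what: a restricted MSL is not, by definition, an absolute MSL stripped of its U-turns. One must show (i) that the points a maximal U-turn-free configuration leaves unmatched occur precisely in pairs $\{E_i,X_i\}$, so that it completes uniquely to a noncrossing perfect matching, and (ii) that a lane $E_iX_j$ can be added to the U-turn-free part if and only if the swap $E_iX_i,E_jX_j\to E_iX_j,E_jX_i$ is admissible, which rests on your observation that a chord avoiding the four points $E_i,X_i,E_j,X_j$ crosses $E_iX_j$ exactly when it crosses $E_jX_i$. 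Your face-parity argument for (i) is sound (chords plus circular arcs form a bipartite planar graph, so every face carries as many unmatched entries as unmatched exits, and maximality forces any leftover entry--exit pair to share an index), the crossing computation for (ii) is sound, and the two constructions are visibly mutually inverse. What your approach buys is rigor exactly where the paper waves its hands; what the paper's approach buys is brevity, at the cost of leaving this corollary --- unlike the other two --- genuinely unproved.
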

%\begin{proof}
%The restriction that U-turns are prohibited does not change the induction mentioned in the proof of {\bf Lemma} \ref{lem:catalan}: 
%\begin{itemize}
%\item if an entry $E_i$ appears in a MSL, then the exit $X_i$ appears in the same MSL, and vice versa (the only difference is that when U-turns are prohibited, it is possible that some entry $E_i$ and the associated exit $X_i$ do not appear in a given MSL),
%\item the absolute MSL are one-to-one associated with the lonely singles partitions of $[n]$ and so are the MSL for a standard road intersection where U-turns are prohibited, considering that a singleton $\{i\}$ of such a partition corresponds to the fact that the entry $E_i$ and the exit $X_i$ do not appear in the corresponding MSL.
%\end{itemize}
%Considering {\bf Definitions} \ref{def:Ln} and \ref{def:abs}, the result follows. 
%\end{proof}

\section{Properties}

\begin{proposition}
Both sequences $(L_n)$ and $(M_n)$ are increasing: $L_n<L_{n+1}$ for all $n\geq 2$ and $M_n<M_{n+1}$ for all $n\geq 3$.
\label{prop:increasing}
\end{proposition}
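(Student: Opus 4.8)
The plan is to prove each strict inequality by exhibiting, for the relevant range of $n$, an injection from the set of partitions of $[n]$ of a given type into the set of partitions of $[n+1]$ of the same type, and then producing at least one partition of $[n+1]$ of that type lying outside the image; this forces the target count to strictly exceed the source count. The point to watch is that the most obvious candidate map, which appends the singleton block $\{n+1\}$, works well for marriageable partitions but does not preserve the lonely singles property (adding a fresh isolated point can turn a lonely partition into a marriageable one), so the two sequences must be handled by two different maps.

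For $(M_n)$ I would use $\psi(\pi)=\pi\cup\{\{n+1\}\}$. Appending an isolated maximal point cannot create a crossing, since a crossing needs a block of size at least two on each side, so $\psi(\pi)$ is noncrossing, and $\psi$ is injective because deleting $n+1$ inverts it. If $\pi$ is marriageable via singletons $\{i\},\{j\}$ whose merge stays noncrossing, then in $\psi(\pi)$ those two blocks are still singletons and $\{i,j\}$ still crosses nothing (the only new block $\{n+1\}$ is a singleton), so $\psi(\pi)$ is again marriageable. Finally, every partition in the image has $n+1$ as a singleton, whereas for $n\geq 3$ the partition $\{\{n,n+1\},\{1\},\dots,\{n-1\}\}$ is marriageable --- merging $\{1\}$ and $\{2\}$ stays noncrossing because $1<2<n<n+1$ --- and has $n+1$ in a block of size two, so it is missed. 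This gives $M_n<M_{n+1}$ for $n\geq 3$.

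For $(L_n)$ the delicate map is $\phi(\pi)=$ the partition obtained by inserting $n+1$ into the block of $\pi$ that contains $n$. First I would check that $\phi(\pi)$ is noncrossing: since $n+1$ is the new maximum, it can never play the role of an interior element of a crossing, so any crossing of the enlarged block would already be a crossing of the original block of $\pi$. The map is injective since deleting $n+1$ recovers $\pi$. The heart of the argument, and the step I expect to be the main obstacle, is that $\phi$ sends lonely partitions to lonely partitions. Here I would argue by contraposition: in $\phi(\pi)$ neither $n$ nor $n+1$ is a singleton, so any pair of singleton blocks $\{i\},\{j\}$ of $\phi(\pi)$ has $i,j\leq n-1$, and these blocks are unchanged from $\pi$. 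The only block that grew is $B\mapsto B\cup\{n+1\}$, and since $B\subseteq B\cup\{n+1\}$, if $\{i,j\}$ crosses nothing in $\phi(\pi)$ then in particular it does not cross $B$, so the same merge is noncrossing already in $\pi$. Thus a marriageable $\phi(\pi)$ would force $\pi$ marriageable, establishing the contrapositive.

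To finish the lonely case I would note that every partition in the image of $\phi$ has $n+1$ in a block of size at least two, while for $n\geq 2$ the partition $\{\{1,\dots,n\},\{n+1\}\}$ has a single singleton block and is therefore lonely, yet has $n+1$ isolated and so is not in the image. This yields $L_n<L_{n+1}$ for $n\geq 2$. The thresholds $n\geq 3$ and $n\geq 2$ are exactly what these witnesses demand (the block $\{1,\dots,n\}$ must have size at least two to be a genuine non-singleton, and the marriageable witness needs at least two free singletons to merge), consistent with the small-case coincidences $M_2=M_3$ and $L_1=L_2$.
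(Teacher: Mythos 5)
Your proof is correct and follows essentially the same approach as the paper: the same append-a-singleton map for the marriageable case, the same glue-$n+1$-to-a-neighbouring-block map for the lonely case (the paper attaches $n+1$ to the block containing $1$, but explicitly remarks that using the block containing $n$, as you do, is equivalent), and the same two witness partitions $\{\,[n],\{n+1\}\,\}$ and $\{\,\{1\},\dots,\{n-1\},\{n,n+1\}\,\}$ for strictness. In fact your verification that the lonely-singles property is preserved (via contraposition) is spelled out in more detail than in the paper, which merely asserts it.
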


\begin{proof}
We build a simple injective map $f_n$ from the set $LS_n$ of the lonely singles partitions of $[n]$ to the set $LS_{n+1}$ of the lonely singles partitions of $[n+1]$ by merging the singleton $\{n+1\}$ to the unique element $A_1$ of a partition $\pi$ that contains the number $1$ (one could equally prefer to use the number $n$ instead of the number 1: the idea is to stick the number $n+1$ to one of its two direct neighbours 1 or $n$):
\begin{align*}
LS_n & \to LS_{n+1}\\
f_n\colon \pi=\{\,A_1,\dots,A_k\,\} & \mapsto \{\,A_1\cup\{n+1\},\dots,A_k\,\}
\end{align*}
where $1\in A_1$ and $A_1\cup\dots\cup A_k=[n]$ and $A_i\cap A_j=\emptyset$ for all $i\neq j$. For example,
\begin{align*}
f_3\left(\{\,\{1,2\},\{3\}\,\}\right) &= \{\,\{1,2,4\},\{3\}\,\}\\
f_4\left(\{\,\{1\},\{2,4\},\{3\}\,\}\right)&= \{\,\{1,5\},\{2,4\},\{3\}\,\}\\
\end{align*}
The map $f_n$ is clearly injective and any lonely singles partition $\pi$ gives a lonely singles partition $f_n(\pi)$. As $L_n$ is the cardinality of $LS_n$ and $L_{n+1}$ is the cardinality of $LS_{n+1}$, we obtain $L_n\leq L_{n+1}$. 

In a similar way, we build a simple injective map $g_n$ from the set $MS_n$ of the marriageable singles partitions of $[n]$ to the set $MS_{n+1}$ of the marriageable singles partitions of $[n+1]$ by adding the singleton $\{n+1\}$ to any partition $\pi$:
\begin{align*}
MS_n & \to MS_{n+1}\\
g_n \colon \pi & \mapsto \pi\cup \{\,\{n+1\}\,\}
\end{align*}
For example
\begin{align*}
%g_3\left(\{\,\{1\},\{2\},\{3\}\,\}\right)&= \{\,\{1\},\{2\},\{3\},\{4\}\,\}\\
g_4\left(\{\,\{1\},\{2,3\},\{4\}\,\}\right)&= \{\,\{1\},\{2,3\},\{4\},\{5\}\,\}
\end{align*}
The map $g_n$ is clearly injective and any pair of marriageable singletons $\{i\}$ and $\{j\}$ of $\pi\in MS_n$ remains marriageable as elements of $g_n(\pi)\in MS_{n+1}$. As $M_n$ is the cardinality of $MS_n$ and $M_{n+1}$ is the cardinality of $MS_{n+1}$, we obtain $M_n\leq M_{n+1}$.

More precisely, we have $L_n<L_{n+1}$ for all $n\geq 2$ and $M_n<M_{n+1}$ for all $n\geq 3$: it is easy to build a lonely singles and a marriageable singles partitions of $[n+1]$ that are not in the images of the maps $f_n$ and $g_n$, e.g.\ respectively $\{\,[n],\{n+1\}\,\}$ and $\{\,\{1\},\dots,\{n-1\},\{n,n+1\}\,\}$.
\end{proof}

\begin{corollary}
$$\lim_{n\to+\infty}M_n=\lim_{n\to+\infty}L_n=+\infty$$
\end{corollary}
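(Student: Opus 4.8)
The plan is to deduce the divergence directly from the strict monotonicity established in Proposition~\ref{prop:increasing}, exploiting the fact that both $(M_n)$ and $(L_n)$ are sequences of nonnegative integers. The essential observation is that a sequence of integers which is strictly increasing from some index onward must increase by at least $1$ at each step, and therefore cannot remain bounded.

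First I would recall from Definition~\ref{def:LnMn} that $M_n$ and $L_n$ count finite sets of partitions, hence are nonnegative integers. Next, from Proposition~\ref{prop:increasing} we have $L_n < L_{n+1}$ for all $n\geq 2$; since these are integers, this strict inequality upgrades to $L_{n+1}\geq L_n+1$. A one-line induction then yields $L_n\geq L_2+(n-2)$ for every $n\geq 2$, and the right-hand side tends to $+\infty$, forcing $L_n\to+\infty$. The same argument applied to $M_n<M_{n+1}$ for $n\geq 3$ gives $M_n\geq M_3+(n-3)$ for all $n\geq 3$, whence $M_n\to+\infty$.

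There is no genuine obstacle here: all the substantive work has already been done in constructing the injections $f_n$ and $g_n$ of Proposition~\ref{prop:increasing}. The only point worth making explicit is the integrality step — that strict monotonicity of an integer-valued sequence automatically produces at least linear growth — which is precisely what converts \emph{increasing} into \emph{diverging to} $+\infty$.
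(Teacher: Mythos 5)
Your proof is correct and follows essentially the same route as the paper, which simply declares the result an immediate consequence of Proposition~\ref{prop:increasing}. Your only addition is to make explicit the integrality step (strictly increasing integer sequences grow at least linearly), a point the paper leaves implicit but which is indeed what makes the deduction valid, since a strictly increasing sequence of reals need not diverge.
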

\begin{proof}
It is an an immediate consequence of {\bf Proposition \ref{prop:increasing}}.
\end{proof}

\begin{proposition}
For all $n\geq 0$, we have $C_n+3M_n\leq M_{n+2}$.
\end{proposition}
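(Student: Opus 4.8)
The plan is to prove the inequality by exhibiting four maps into the set $MS_{n+2}$ of marriageable singles partitions of $[n+2]$ whose images are pairwise disjoint: one injection from the set of \emph{all} $C_n$ noncrossing partitions of $[n]$, and three injections from $MS_n$. Adding up the sizes of the images then yields $M_{n+2}\ge C_n+3M_n$, exactly as wanted.

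I would define the four maps by prescribing where the two new points $n+1$ and $n+2$ are placed. For every noncrossing partition $\pi$ of $[n]$ set $\phi(\pi)=\pi\cup\{\{n+1\},\{n+2\}\}$; and for $\pi\in MS_n$ set $h_1(\pi)=\pi\cup\{\{n+1,n+2\}\}$, let $h_2(\pi)$ glue $n+1$ to the block of $\pi$ containing $n$ and append the singleton $\{n+2\}$, and let $h_3(\pi)$ glue $n+2$ to the block containing $1$ and append the singleton $\{n+1\}$. Each map is visibly injective, since one recovers $\pi$ by deleting the new points. Moreover the four images are pairwise disjoint, because they are separated by whether $n+1$ and whether $n+2$ forms a singleton block: this pair of answers is $(\text{yes},\text{yes})$ for $\phi$, $(\text{no},\text{no})$ for $h_1$, $(\text{no},\text{yes})$ for $h_2$, and $(\text{yes},\text{no})$ for $h_3$. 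The real content is therefore to check that every image genuinely lies in $MS_{n+2}$, i.e.\ is both noncrossing and marriageable.

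For noncrossing-ness I would first isolate a short gluing lemma: in a noncrossing partition, adding a new largest point to the block containing $1$, or to the block containing the current largest point, never creates a crossing, since any new crossing would force a crossing already present in $\pi$. This covers $h_2$ and $h_3$ (the images of $\phi$ and $h_1$ are noncrossing for trivial reasons). For marriageability I would use the criterion that two singleton blocks $\{i\}$ and $\{j\}$ with $i<j$ can be merged while staying noncrossing exactly when the open interval $\{i+1,\dots,j-1\}$ is a union of whole blocks. Under this criterion $\phi(\pi)$ is marriageable because $\{n+1\},\{n+2\}$ are always mergeable, and $h_1(\pi)$ is marriageable because the mergeable pair of $\pi$ is untouched, the new block $\{n+1,n+2\}$ lying beyond every element of $[n]$.

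The main obstacle is the marriageability of $h_2$ and $h_3$: gluing a new point to the block of $n$ (respectively of $1$) may destroy the very singleton that made $\pi$ marriageable. This is a genuine danger for the \emph{fragile} partitions whose only mergeable pair is $\{1\},\{n\}$, such as $\{\{1\},\{2,4\},\{3\},\{5\}\}$. I would resolve it by a short case split. If $\pi$ has a mergeable pair $\{i\},\{j\}$ avoiding $n$ (respectively avoiding $1$), that pair survives by the same interval argument used for $h_1$. Otherwise every mergeable pair is of the form $\{i\},\{n\}$ (respectively $\{1\},\{j\}$), and one shows the fresh singleton inherits its role: $\{i\},\{n+2\}$ becomes mergeable for $h_2$ — immediate from the interval criterion, since the old interval $\{i+1,\dots,n-1\}$ is a union of blocks and $\{n,n+1\}$ extends it — and $\{j\},\{n+1\}$ becomes mergeable for $h_3$, where the extra point is that, $\{1\}$ being a singleton, no block can straddle $j$, so $\{j+1,\dots,n\}$ is again a union of whole blocks. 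With these verifications, injectivity and disjointness of the four images give $M_{n+2}\ge C_n+3M_n$ for all $n\ge 0$.
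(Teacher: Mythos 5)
Your proposal is correct and follows essentially the same route as the paper: the same four maps (adding two singletons to an arbitrary noncrossing partition, and adding the pair $\{n+1,n+2\}$, gluing $n+1$ to the block of $n$, or gluing $n+2$ to the block of $1$ for marriageable partitions), with disjointness read off from whether $n+1$ and $n+2$ are singletons. In fact you supply more detail than the paper, which dismisses the marriageability of the glued images as ``immediate''; your interval criterion and the case split handling partitions whose only mergeable pair involves $\{n\}$ (resp.\ $\{1\}$) make precise exactly the point the paper glosses over with the parenthetical remark that ``the marriageable pairs can change depending on the map selected.''
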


\begin{proof}
We build four simple injective maps $h_n, i_n, j_n$ and $k_n$ with disjoint images included in the set $MS_{n+2}$ of the marriageable singles partitions of $[n+2]$.

\begin{itemize}
\item The map $h_n$ is defined on the set $NC_n$ of all noncrossing partitions of $[n]$ by adding both the singletons $\{n+1\}$ and $\{n+2\}$ to any noncrossing partition $\pi$:
\begin{align*}
NC_n&\to MS_{n+2}\\
h_n\colon \pi&\mapsto \pi\cup \{\,\{n+1\}, \{n+2\}\,\}
\end{align*}
The maps $i_n, j_n$ and $k_n$ are defined on the set $MS_n$ of the marriageable singles partitions of $[n]$.

\item The map $i_n$ adds the pair $\{n+1, n+2\}$ to a marriageable singles partition $\pi$:
\begin{align*}
MS_n &\to MS_{n+2}\\
i_n\colon \pi &\mapsto \pi\cup \{\,\{n+1,n+2\}\,\}
\end{align*}

\item The map $j_n$ merges the singleton $\{n+2\}$ with the unique element $A_1$ of a marriageable singles partition $\pi$ that contains the number $1$ and adds the singleton $\{n+1\}$ to $\pi$:
\begin{align*}
MS_n&\to MS_{n+2}\\
j_n\colon \pi=\{\,A_1,\dots,A_k\,\}&\mapsto \{\,A_1\cup\{n+2\},A_2,\dots,A_k, \{n+1\}\,\}
\end{align*}

where $1\in A_1$, $A_1\cup\dots\cup A_k=[n]$ and $A_i\cap A_j=\emptyset$ for all $i\neq j$.

\item the map $k_n$ merges the singleton $\{n+1\}$ with the unique element $A_1$ of a marriageable singles partition $\pi$ that contains the number $n$ and adds the singleton $\{n+2\}$ to $\pi$:
\begin{align*}
MS_n &\to MS_{n+2}\\
k_n : \pi=\{\,A_1,\dots,A_k\,\} &\mapsto \{\,A_1\cup\{n+1\},A_2,\dots,A_k, \{n+2\}\,\}
\end{align*}

where $n\in A_1$, $A_1\cup\dots\cup A_k=[n]$ and $A_i\cap A_j=\emptyset$ for all $i\neq j$.
\end{itemize}
For example, 
\begin{align*}
h_4\left(\{\,\{1,2,3\},\{4\}\,\}\right) &= \{\,\{1,2,3\},\{4\},\{5\},\{6\}\,\}\\
%h_4\left(\{\,\{1,2\},\{3\},\{4\}\,\}\right) &= \{\,\{1,2\},\{3\},\{4\},\{5\},\{6\}\,\}\\
i_4\left(\{\,\{1,2\},\{3\},\{4\}\,\}\right) &= \{\,\{1,2\},\{3\},\{4\},\{5,6\}\,\}\\
j_4\left(\{\,\{1,2\},\{3\},\{4\}\,\}\right) &= \{\,\{1,2,6\},\{3\},\{4\},\{5\}\,\}\\
k_4\left(\{\,\{1,2\},\{3\},\{4\}\,\}\right) &= \{\,\{1,2\},\{3\},\{4,5\},\{6\}\,\}
\end{align*}

We have the immediate following properties:
\begin{itemize}
\item $h_n(\pi)$ is a marriageable singles partition of $[n+2]$ for all noncrossing partition $\pi$ of $[n]$,
\item $i_n(\pi), j_n(\pi)$ and $k_n(\pi)$ are marriageable singles partitions of $[n+2]$ for all marriageable singles partition $\pi$ of $[n]$ (the marriageable pairs can change depending on the map selected),
\item $h_n, i_n, j_n$ and $k_n$ are injective maps; thus,
\begin{itemize}
\item $NC_n$ and $h_n(NC_n)$ have the same cardinality $C_n$,
\item $MS_n$ and $i_n(MS_n)$ and $j_n(MS_n)$ and $k_n(MS_n)$ have the same cardinality $M_n$,
\end{itemize}
\item the sets $h_n(NC_n)$, $i_n(MS_n)$, $j_n(MS_n)$ and $k_n(MS_n)$ are disjoint; thus,
$$ h_n(NC_n)\sqcup i_n(MS_n)\sqcup j_n(MS_n)\sqcup k_n(MS_n)\subset MS_{n+2}, $$
\item as $M_{n+2}$ is the cardinality of $MS_{n+2}$, we obtain $C_n+3M_n\leq M_{n+2}$.
\end{itemize}
\end{proof}

\begin{definition}
For all nonnegative integers $n$, $m$ and $k$, let $NC_{n,m,k}$ be the number of noncrossing partitions of $[n]$ in $m$ classes with $k$ singleton blocks.
\end{definition}

\begin{proposition}
For all nonnegative integers $n$ and $m$ with $(n,m)\neq(0,0)$, the number of noncrossing partitions of $[n]$ in $m$ classes with no singleton block is 
$$NC_{n,m,0}=\frac{1}{n-m+1}\binom{n}{m}\binom{n-m-1}{m-1}.$$
When $(n,m)\neq(1,1)$, the number of noncrossing partitions of $[n]$ in $m$ classes with exactly one singleton block is $$NC_{n,m,1}=\binom{n}{m-1}\binom{n-m-1}{m-2}.$$
\label{prop:atmost1}
\end{proposition}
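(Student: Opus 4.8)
The plan is to invoke Kreweras' classical formula for the number of noncrossing partitions of a prescribed \emph{block-size type} and then sum it over the types relevant to each case. Recall from Kreweras \cite{Kreweras1972} that if one prescribes $b_i$ blocks of size $i$ for every $i\ge 1$, so that $\sum_{i\ge1} i\,b_i=n$ and $m=\sum_{i\ge1}b_i$ is the total number of blocks, then the number of noncrossing partitions of $[n]$ of that type equals
$$\frac{n!}{(n-m+1)!\,\prod_{i\ge1}b_i!}.$$
I would take this as the single external input; everything else is bookkeeping. The point that makes the summation tractable is that the prefactor $n!/(n-m+1)!=n(n-1)\cdots(n-m+2)$ depends only on $n$ and the number of blocks $m$, not on how the block sizes are distributed.

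For $NC_{n,m,0}$ I would sum this formula over all types with no singleton, i.e. over all $(b_i)_{i\ge2}$ with $\sum_{i\ge2}b_i=m$ and $\sum_{i\ge2} i\,b_i=n$. Pulling out the prefactor leaves the sum $\sum 1/\prod_{i\ge2}b_i!$. The key step is to recognize that $\sum \big(m!/\prod_{i\ge2}b_i!\big)$ is exactly the number of \emph{compositions} of $n$ into $m$ ordered parts each at least $2$, since the multinomial $m!/\prod b_i!$ counts the distinct orderings of a fixed multiset of part sizes. By the standard stars-and-bars substitution (replace each part $c$ by $c-2\ge0$), that number of compositions is $\binom{n-m-1}{m-1}$. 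Hence the residual sum equals $\frac{1}{m!}\binom{n-m-1}{m-1}$, and multiplying back by $n!/(n-m+1)!$ and using $\frac{n!}{(n-m+1)!\,m!}=\frac{1}{n-m+1}\binom{n}{m}$ yields the claimed value of $NC_{n,m,0}$.

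For $NC_{n,m,1}$ I would run the identical argument with one block of size $1$ forced, i.e. $b_1=1$ and $(b_i)_{i\ge2}$ with $\sum_{i\ge2}b_i=m-1$, $\sum_{i\ge2}i\,b_i=n-1$. The factor $b_1!=1$ is harmless, and the residual sum $\sum\big((m-1)!/\prod_{i\ge2}b_i!\big)$ now counts compositions of $n-1$ into $m-1$ parts each at least $2$, namely $\binom{n-m-1}{m-2}$. Multiplying back and simplifying $\frac{n!}{(n-m+1)!\,(m-1)!}=\binom{n}{m-1}$ gives $NC_{n,m,1}=\binom{n}{m-1}\binom{n-m-1}{m-2}$.

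Since the computations are routine, the only genuine care is needed at the boundary, and I expect this to be the main (mild) obstacle rather than any hard estimate. The stars-and-bars counts are valid exactly when their arguments are nonnegative, and they correctly return $0$ when $n<2m$ (resp. $n-1<2(m-1)$), where no admissible type exists. The excluded cases $(n,m)=(0,0)$ and $(n,m)=(1,1)$ are precisely where the sole admissible partition is the empty or the one-element partition and the formula is forced into a binomial coefficient of the shape $\binom{-1}{-1}$, whose value is convention-dependent; I would simply flag this as the reason those cases are set aside in the statement. The one expository subtlety to present cleanly is the reduction ``sum over block-size types $=$ composition count,'' which is what converts Kreweras' formula into the two closed forms.
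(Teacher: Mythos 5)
Your proof is correct, but it takes a genuinely different route from the paper's. The paper does not prove the first formula at all: it cites Poupard \cite{Poupard1972} for $NC_{n,m,0}$ as a black box, and then obtains the second formula from the first via a deletion bijection, sending a noncrossing partition of $[n]$ with exactly one singleton block $\{i\}$ to the couple $(\pi,i)$, where $\pi$ is the singleton-free noncrossing partition of $[n-1]$ obtained by removing $i$ and relabeling; this gives $NC_{n,m,1}=n\cdot NC_{n-1,m-1,0}$, and the binomial simplification finishes. You instead take Kreweras' type formula $\frac{n!}{(n-m+1)!\,\prod_i b_i!}$ as the single external input and derive \emph{both} closed forms by summing over block-size types, with the key reduction that $\sum m!/\prod_{i\ge 2}b_i!$ counts compositions into parts at least $2$; your algebra checks out in both cases. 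Neither argument is more self-contained than the other --- each leans on one classical 1972 result (Poupard for the paper, Kreweras for you) --- but your route is more unified and strictly more general: fixing $b_1=k$ in the same computation yields $NC_{n,m,k}=\frac{n!}{(n-m+1)!\,k!\,(m-k)!}\binom{n-m-1}{m-k-1}$ for every $k$, which specializes to both claimed formulas, whereas the paper's bijection is a lighter, purely combinatorial reduction of the second case to the first. One shared caveat: in the degenerate range (e.g., $n\le m$, as opposed to $m+1\le n<2m$ where the binomial vanishes as an honest count) the stated formulas return $0$ only under the convention $\binom{a}{b}=0$ for $a<0$; you flag this boundary issue explicitly, and the paper glosses over it just as briefly, so it is not a gap particular to your argument.
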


\begin{proof}
Poupard \cite{Poupard1972} proved that, when $n\geq 1$ and $m\geq 1$, the number of noncrossing partitions of $[n]$ in $m$ classes with no singleton block is $NC_{n,m,0}=\frac{1}{n-m+1}\binom{n}{m}\binom{n-m-1}{m-1}$, and $n\geq 2m$. When $n=0$ and $m\geq 1$ or $n\geq 1$ and $m=0$ or $m\geq \lfloor \frac{n}{2}\rfloor+1$, this equality stands and gives a number of such partitions equal to $0$.

When $n\geq 1$ and $m\geq 1$, the set of noncrossing partitions of $[n]$ in $m$ classes with exactly one singleton block is clearly in one-to-one correspondence with the set of couples $(\pi,i)$, where $\pi$ is any noncrossing partition of $[n-1]$ in $m-1$ classes with no singleton block and $1\leq i\leq n$.
Then, when $(n,m)\neq (1,1)$, we obtain that the number of noncrossing partitions of $[n]$ in $m$ classes with exactly one singleton block is $NC_{n,m,1}=n\times NC_{n-1,m-1,0}=\frac{n}{n-m+1}\binom{n-1}{m-1}\binom{n-m-1}{m-2}=\binom{n}{m-1}\binom{n-m-1}{m-2}$.
\end{proof}

\begin{remark}
Here are a few details on specific cases:
\begin{itemize}
\item The empty partition is the unique noncrossing partition of the empty set $[0]$ in $0$ class with no singleton block. Thus, $NC_{0,0,0}=1$.
\item The set $[1]$ has a unique noncrossing partition. This partition has $m=1$ class and it is a singleton block. Thus, $NC_{1,1,1}=1$ and $NC_{1,m,k}=0$ when $(m,k)\neq(1,1)$ and, in particular, $NC_{1,1,0}=0$.
\item For all positive integer $n$, $[n]$ has no noncrossing partition in $m=0$ class. Thus, for all nonnegative integer $k$, $NC_{n,0,k}=0$ and in particular $NC_{n,0,0}=0$.
\end{itemize}
\end{remark}

\begin{proposition}
For all $n\geq 2$, 
$$L_n\geq \sum_{m=1}^{\lfloor\frac{n}{2}\rfloor}\frac{1}{n-m+1}\binom{n}{m}\binom{n-m-1}{m-1}
+ \sum_{m=2}^{\lfloor\frac{n+1}{2}\rfloor}\binom{n}{m-1}\binom{n-m-1}{m-2}.$$
\end{proposition}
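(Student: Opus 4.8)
The plan is to recognize the right-hand side as the number of noncrossing partitions of $[n]$ having \emph{at most} one singleton block, and then to observe that every such partition is automatically a lonely singles partition.

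First I would invoke Proposition \ref{prop:atmost1} to identify the two summands: the first is $NC_{n,m,0}$ and the second is $NC_{n,m,1}$, so that the right-hand side equals
\[
\sum_{m=1}^{\lfloor n/2\rfloor} NC_{n,m,0}\;+\;\sum_{m=2}^{\lfloor (n+1)/2\rfloor} NC_{n,m,1}.
\]
Next I would check that these index ranges truncate the sums exactly where the binomial factors vanish. A noncrossing partition of $[n]$ with $m$ nonsingleton blocks requires $n\geq 2m$, hence $m\leq\lfloor n/2\rfloor$; a noncrossing partition with exactly one singleton consists of that singleton together with $m-1$ blocks of size at least $2$ covering the remaining $n-1$ points, forcing $n-1\geq 2(m-1)$ and hence $m\leq\lfloor (n+1)/2\rfloor$. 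Consequently the first sum counts all noncrossing partitions of $[n]$ with no singleton block and the second counts all those with exactly one singleton block, so their sum is precisely the number of noncrossing partitions of $[n]$ with at most one singleton block.

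Finally I would appeal to the remark following Definitions \ref{def:LnMn}, according to which any noncrossing partition containing at most one singleton block is a lonely singles partition. Thus the family enumerated by the right-hand side is contained in the set $LS_n$ of lonely singles partitions of $[n]$, and comparing cardinalities yields that $L_n$ is at least the right-hand side. The inequality need not be an equality, since $LS_n$ also contains partitions with two or more singleton blocks for which no admissible pair can be merged without creating a crossing (as in the last two lonely singles partitions of $[4]$ in the Examples); these contribute to $L_n$ but not to the right-hand side.

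Since the whole argument is a set-inclusion count, I do not expect a genuine obstacle. The only step demanding care is confirming the summation bounds, i.e.\ that the cutoffs $\lfloor n/2\rfloor$ and $\lfloor (n+1)/2\rfloor$ coincide exactly with the vanishing of the corresponding binomial coefficients, so that the two sums truly enumerate all partitions with no singleton and all with exactly one singleton, without omission or overcounting.
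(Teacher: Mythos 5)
Your proposal is correct and follows essentially the same route as the paper: identify the two sums as $\sum_m NC_{n,m,0}$ and $\sum_m NC_{n,m,1}$ via Proposition~\ref{prop:atmost1}, note that any noncrossing partition with at most one singleton block is a lonely singles partition, and conclude by comparing cardinalities. Your extra verification of the summation cutoffs is a sensible addition (the paper silently starts the second sum at $m=1$, which agrees with the statement's $m=2$ since $NC_{n,1,1}=0$ for $n\geq 2$), but it does not change the argument.
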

\begin{proof}
As any noncrossing partition with at most one singleton block is a lonely singles partition, we have $L_n\geq \sum_{m=1}^{\lfloor \frac{n}{2}\rfloor} NC_{n,m,0} + \sum_{m=1}^{\lfloor \frac{n+1}{2}\rfloor} NC_{n,m,1}$ and the result follows, using {\bf Proposition \ref{prop:atmost1}}.
\end{proof}

\begin{proposition}
For all $n\geq 3$,
\begin{align*}
M_n\geq \sum_{i=1}^{n-1}\sum_{j=i+1}^{n} &\left(\sum_{m=0}^{\lfloor \frac{n+i-j-1}{2}\rfloor} \frac{1}{n+i-j-m}\binom{n+i-j-1}{m}\binom{n+i-j-m-2}{m-1}\right.\\
&\times \left.\sum_{m=0}^{\lfloor \frac{j-i-1}{2}\rfloor}\frac{1}{j-i-m}\binom{j-i-1}{m}\binom{j-i-m-2}{m-1}\right)%\\
%&+ \sum_{m=1}^{\lfloor\frac{n-2}{2}\rfloor}\frac{n}{n-m-1}\binom{n-2}{m}\binom{n-m-3}{m-1}.
\end{align*}
\end{proposition}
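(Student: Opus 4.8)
The plan is to read the right-hand side as a count of certain configurations and then to exhibit an injection from those configurations into the set of marriageable singles partitions of $[n]$.

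First I would recognize the two inner sums. By Proposition~\ref{prop:atmost1}, for every integer $p\ge 0$ the quantity
$$D_p:=\sum_{m=0}^{\lfloor p/2\rfloor}NC_{p,m,0}=\sum_{m=0}^{\lfloor p/2\rfloor}\frac{1}{p-m+1}\binom{p}{m}\binom{p-m-1}{m-1}$$
is exactly the number of noncrossing partitions of $[p]$ having no singleton block (with the conventions $D_0=1$ and $D_1=0$). Substituting $p=n+i-j-1$ into the first inner sum and $p=j-i-1$ into the second shows that the summand of the double sum is the product $D_{n+i-j-1}\,D_{j-i-1}$, so that the right-hand side equals $\sum_{1\le i<j\le n}D_{n+i-j-1}\,D_{j-i-1}$, the number of quadruples $(i,j,\sigma,\tau)$ in which $1\le i<j\le n$, where $\sigma$ is a singleton-free noncrossing partition of the $j-i-1$ integers strictly between $i$ and $j$, and $\tau$ is a singleton-free noncrossing partition of the $n+i-j-1$ remaining integers.

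Next I would describe the map. Using the circular chord picture underlying Lemma~\ref{lem:catalan}, I place $1,\dots,n$ on a circle; the two points $i$ and $j$ cut it into an inner arc $\{i+1,\dots,j-1\}$ and an outer arc $\{j+1,\dots,n,1,\dots,i-1\}$. To a quadruple $(i,j,\sigma,\tau)$ I associate the partition $\pi$ of $[n]$ whose blocks are $\{i\}$, $\{j\}$, the blocks of $\sigma$ carried by the inner arc, and the blocks of $\tau$ carried by the outer arc. Since $\sigma$ and $\tau$ are singleton-free, the only singleton blocks of $\pi$ are $\{i\}$ and $\{j\}$; since the two arcs lie on opposite sides of the chord $ij$, the partition $\pi$ is noncrossing, and moreover replacing $\{i\},\{j\}$ by the single block $\{i,j\}$ keeps it noncrossing, because that chord separates the inner blocks from the outer ones. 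Hence $\pi$ is a marriageable singles partition of $[n]$ whose marriageable pair is $\{i\},\{j\}$. From $\pi$ one recovers $i$ and $j$ as its only two singletons, and $\sigma,\tau$ as the restrictions of $\pi$ to the two arcs, so $(i,j,\sigma,\tau)\mapsto\pi$ is injective; its image is in fact precisely the set of marriageable singles partitions with exactly two singleton blocks. As this image lies in $MS_n$, the number of quadruples is at most $M_n$, which is the claimed bound. The inequality is generally strict, since partitions with three or more singletons, such as $\{\{1\},\dots,\{n\}\}$, are marriageable but not in the image.

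I expect the main obstacle to be the verification that $\pi$ and its merge are noncrossing, the delicate point being the outer arc, which wraps around in the linear order $1,\dots,n$ (an outer block may contain both small and large integers). This is most transparent in the circular chord description, where the chord $ij$ manifestly separates the two arcs; a purely linear verification against the crossing condition is possible but requires splitting into cases according to whether a block lies in $\{1,\dots,i-1\}$, in $\{j+1,\dots,n\}$, or straddles both.
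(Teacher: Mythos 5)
Your proof is correct and takes essentially the same route as the paper's: the paper also identifies the right-hand side with the number of marriageable singles partitions having exactly two singleton blocks $\{i\}$ and $\{j\}$, via the correspondence with pairs of singleton-free noncrossing partitions of the inner arc $\{i+1,\dots,j-1\}$ and the outer (cyclic) arc. Your explicit construction of the injection, the chord-separation argument, and the handling of the wrap-around arc simply fill in details that the paper compresses into ``clearly in one-to-one correspondence.''
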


\begin{proof}
Let $\pi$ be a marriageable singles partition of $[n]$ with exactly two singleton blocks $\{i\}$ and $\{j\}$ (with $i<j$). The set of such partitions $\pi$ is clearly in one-to-one correspondence with the set of triplets $(\{i,j\},\pi_1,\pi_2)$ where $1\leq i<j\leq n$ and $\pi_1$ and $\pi_2$ are noncrossing partitions of the sets $[n+i-j-1]$ and $[j-i-1]$ respectively, with no singleton blocks. Thus,

%Considering separately the special case where $\{i\}$ and $\{j\}$ are neighbours (i.e.\ $1\leq i\leq n-1$ and $j=i+1$ or $i=1$ and $j=n$), we have
$$M_n\geq \sum_{i=1}^{n-1}\sum_{j=i+1}^{n}\left(\sum_{m=0}^{\lfloor \frac{n+i-j-1}{2} \rfloor}NC_{n+i-j-1,m,0}\sum_{m=0}^{\lfloor \frac{j-i-1}{2} \rfloor}NC_{j-i-1,m,0}\right).$$
\end{proof}

\begin{remark}
The previous inequality can be improved, considering three or more singleton blocks that can all be joined.
\end{remark}
\section{Known values and conjectures}

Very few values of the sequences $L_n$ and $M_n$ are known for the moment. They are given in {\bf Table \ref{table}}. $L_n$ and $M_n$ are respectively sequences \seqnum{A363448} and \seqnum{A363449} in the Online Encyclopedia of Integer Sequences (OEIS) \cite{oeis}.

\begin{table}[H]
\centering
\begin{tabular}{|c|c|c|c|c|c|c|c|c|}
\hline
$n$ & $L_n$ & $L_n/L_{n-1}$ & $M_n$ & $M_n/M_{n-1}$ & $C_n$ & $M_n/L_n$ & $M_n/C_n$ & Computing \\
&&&&&&&& time for $M_n$\\
\hline
0&1&  &0&  &1&0& 0 &0s\\
\hline
1&1& 1 &0&  &1&0& 0 &0s\\
\hline
2&1& 1 &1&  &2&1& 0.5 &0s\\
\hline
3&4& 4 &1& 1 &5&0.25& 0.2 &0s\\
\hline
4&9& 2.25 &5& 5 &14&0.56& 0.36 &263$\mu$s\\
\hline
5&26& 2.89 &16& 3.2 &42&0.62& 0.38 &657$\mu$s\\
\hline
6&77& 2.96 &55& 3.44 &132&0.71& 0.42 &1.8ms\\
\hline
7&232& 3.01 &197& 3.58 &429&0.85& 0.46 &7.5ms\\
\hline
8&725& 3.13 &705& 3.58 &1430&0.97& 0.49 &24ms\\
\hline
9&2299& 3.17 &2563& 3.64 &4862&1.11& 0.53 &90ms\\
\hline
10&7401& 3.22 &9395& 3.67 &16796&1.27& 0.56 &507ms\\
\hline
11&22118& 2.99 &36668& 3.90 &58786&1.66& 0.62 &3.7s\\
\hline
12&72766& 3.29 &135246& 3.69 &208012&1.86& 0.65 &39.4s\\
\hline
13& 235124& 3.23 &507776& 3.75 &742900&2.16& 0.68 &15m, 36s\\
\hline
14& 763783& 3.25 &1910657& 3.76 &2674440&2.50& 0.71 &6h, 2m, 40s\\
%\hline
%15& &&&&\\
%\hline
%16& &&&& 
\hline
\end{tabular}
\caption{Values of $L_n$, $M_n$, $C_n$, $L_n/L_{n-1}$, $M_n/M_{n-1}$, $M_n/L_n$ and $M_n/C_n$ (given with 2 digits), with computing time of $M_n$, for all $n\leq 14$.}
\label{table}
\end{table}

The values of $M_n$ have been obtained with the algorithms available in the attached files, provided in pdf and ipynb formats. The complexity of these algorithms is exponential: they need to be improved and to be run with a more powerful computer to obtain values of $L_n$ and $M_n$ for $n\geq 15$. Calculations were performed under Python 3.7.3 using Jupyter Notebook 5.7.6 with a 3.19 GHz i7-8700 processor and 32 GB RAM. 

\begin{conjectures}
We conjecture the five following propositions:
\begin{align}
&\forall n\in\mathbb{N}, n\geq 9 \Rightarrow M_n > L_n,\label{conj1}\\
&\lim_{n\to+\infty}\frac{M_n}{L_n}=+\infty,\label{conj2}\\
&\lim_{n\to+\infty}\frac{M_n}{C_n}=1,\label{conj3}\\
&\lim_{n\to+\infty}\frac{L_n}{C_n}=0,\label{conj4}\\
&\lim_{n\to+\infty}\frac{M_{n+1}}{M_n}=\lim_{n\to+\infty}\frac{L_{n+1}}{L_n}=4.\label{conj5}
\end{align}
\end{conjectures}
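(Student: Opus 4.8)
The five conjectures are tightly linked, so the plan is to organize them around a single asymptotic statement and then harvest the rest. First I would record the purely algebraic reductions coming from Lemma~\ref{lemma_somme}: since $C_n=L_n+M_n$, one has $M_n/C_n=1-L_n/C_n$ and $M_n/L_n=C_n/L_n-1$, so \eqref{conj2}, \eqref{conj3} and \eqref{conj4} are all equivalent to the single claim $\lim_{n\to\infty}L_n/C_n=0$. Moreover \eqref{conj3} immediately yields the $M_n$ half of \eqref{conj5}: if $M_n\sim C_n$ then $M_{n+1}/M_n\sim C_{n+1}/C_n\to 4$, using the classical estimate $C_n\sim 4^n/(\sqrt{\pi}\,n^{3/2})$. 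Thus the real content is (i) proving $L_n=o(C_n)$ and (ii) proving that $L_n$ itself grows with exponential rate exactly $4$ with $L_{n+1}/L_n\to 4$. Conjecture \eqref{conj1} I would treat last and separately.

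For (i) I would argue probabilistically on a uniform random noncrossing partition $\pi$ of $[n]$. The key elementary observation (already implicit in the Examples) is that \emph{if $i$ and $i+1$ are both singleton blocks then $\pi$ is marriageable}, because merging them creates the arc $\{i,i+1\}$, which crosses nothing. Hence every lonely singles partition avoids two consecutive integer singletons, and $L_n/C_n\le \mathbb{P}(\pi\text{ has no two consecutive singleton blocks})$. Counting directly, the number of noncrossing partitions in which $i$ and $i+1$ are both singletons is $C_{n-2}$ (delete the two points), so if $X$ denotes the number of indices $i$ with $\{i\},\{i+1\}$ both singletons, then $\mathbb{E}[X]=(n-1)C_{n-2}/C_n\sim (n-1)/16\to\infty$. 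It then remains to prove concentration, i.e.\ $\mathbb{P}(X=0)\to 0$; I would do this with a second-moment estimate, computing $\mathbb{P}(\{i\},\{i+1\},\{j\},\{j+1\}\text{ all singletons})$ in the disjoint, adjacent and overlapping cases, showing $\mathbb{E}[X^2]=(1+o(1))\mathbb{E}[X]^2$, and concluding by Chebyshev. This gives $L_n/C_n\to0$ and hence \eqref{conj2}--\eqref{conj4}.

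For (ii) I would pass to generating functions. Call $\{i\}$ an \emph{isolated singleton} when it is its own connected component (equivalently, both neighbouring gaps are cut points of $\pi$). A short argument shows that a singleton sitting at the extreme left or right of a nontrivial connected component is forced to be isolated, from which one deduces the clean characterization: $\pi$ is lonely singles iff it has \emph{at most one isolated singleton} and \emph{no two consecutive integer singletons}, the latter constraint then being confined to the strict interior of a single nontrivial component. Decomposing every noncrossing partition into its sequence of connected components gives
\[
L(x)=\sum_{n\ge0}L_nx^n=\frac{1+x-D(x)}{\bigl(1-D(x)\bigr)^{2}},
\]
where $D(x)=\sum_{k\ge2}d_kx^k$ enumerates connected noncrossing partitions of $[k]$ containing no two consecutive singleton blocks (I have checked this reproduces $L_0,\dots,L_4=1,1,1,4,9$). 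Since $D(x)\le \mathcal{C}(x)-x$ with $\mathcal{C}(x)=\tfrac12\bigl(1-\sqrt{1-4x}\bigr)$ the generating function of all connected partitions, one gets $D(1/4)\le\tfrac14<1$, so $1-D$ has no zero on $[0,1/4]$ and $L(x)$ carries no pole before $x=1/4$. Consequently, \emph{provided $D$ has radius of convergence exactly $1/4$}, the dominant singularity of $L$ sits at $1/4$, transfer theorems give $L_{n+1}/L_n\to4$, and \eqref{conj5} follows.

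The main obstacle is exactly the proviso in (ii): one must show that $d_k$ still grows at exponential rate $4$, so that $D$ does not have radius $>1/4$, which would instead force $1-D=0$ at some $x_0\in(1/4,1/2)$, creating a double pole of $L$ and a growth rate strictly below $4$. This is delicate because, by the same mechanism as in (i), consecutive singleton pairs should be common even among connected partitions, so one expects $d_k/C_{k-1}\to0$; the point is to prove this ratio decays only subexponentially (polynomially, or like $e^{-c\sqrt{k}}$), which I would attack by deriving a functional equation for $D(x)$ from the first-return/arc decomposition of connected partitions and then performing a careful singularity analysis. Finally, for \eqref{conj1} I would combine the exact values in Table~\ref{table} (which already establish $M_n>L_n$ for $9\le n\le 14$) with a monotonicity argument for $M_n/L_n$ built from the injective constructions used above, reducing the infinitely many remaining cases to the finitely many already verified.
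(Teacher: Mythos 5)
First, a point of orientation: the statement you are proving is presented in the paper as a set of \emph{conjectures}. The paper offers no proof at all --- only the numerical evidence of Table~\ref{table} and a closing remark recording the elementary reductions (that (\ref{conj2}), (\ref{conj3}), (\ref{conj4}) are equivalent because $C_n=L_n+M_n$, and that (\ref{conj2}) implies (\ref{conj1})). So there is no paper argument to compare against, and your proposal must stand as new mathematics. Your algebraic reductions coincide with the paper's remark and are correct. More interestingly, your part (i) --- the second-moment argument --- is sound in outline: merging two singletons at \emph{consecutive} positions $i,i+1$ can never create a crossing (a crossing would require an element strictly between $i$ and $i+1$), so every lonely singles partition avoids consecutive singleton pairs; the number of noncrossing partitions of $[n]$ in which $\{i\}$ and $\{i+1\}$ are both singletons is exactly $C_{n-2}$, since singleton blocks never participate in crossings; hence $\mathbb{E}[X]\sim n/16$, and the variance is $O(n)$ because $C_{n-4}/C_{n-2}-C_{n-2}/C_n=O(1/n)$, giving $L_n/C_n\le\mathbb{P}(X=0)=O(1/n)$. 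Written out carefully, this would establish (\ref{conj2})--(\ref{conj4}) and the $M_n$-half of (\ref{conj5}), which goes strictly beyond what the paper does.

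Part (ii), however, rests on a false lemma. Your ``clean characterization'' --- lonely iff at most one isolated singleton and no two consecutive integer singletons --- fails, first at $n=6$: take $\pi=\{\,\{1,6\},\{2\},\{3,4\},\{5\}\,\}$. It has no two consecutive singletons, and neither $\{2\}$ nor $\{5\}$ is isolated (both lie under the span of the block $\{1,6\}$, hence in its connected component), yet $\{2\}$ and $\{5\}$ are marriageable: $\{\,\{1,6\},\{2,5\},\{3,4\}\,\}$ is noncrossing. So $\pi$ is a marriageable singles partition that your generating function counts as lonely, and your formula would give $L_6\ge 78$ instead of the true value $77$; your numerical check stopped at $L_4$ (the formula happens to agree through $n=5$), just below the first discrepancy. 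The correct condition inside a connected component is not non-adjacency of singletons but the full separation condition --- some block must have elements both strictly between the two singletons and outside their closed interval --- and this condition is recursive, so $D(x)$ cannot carry the simple meaning you assign it. Moreover, even granting a corrected $D$, you yourself flag that the decisive analytic step (that $D$ has radius of convergence exactly $1/4$) is open, so the $L_n$-half of (\ref{conj5}) remains unproven. Finally, for (\ref{conj1}) the ``monotonicity of $M_n/L_n$'' you invoke is established nowhere: the injections $f_n,g_n$ of Proposition~\ref{prop:increasing} give monotonicity of $L_n$ and $M_n$ separately, not of their ratio, and your asymptotic bound yields $M_n>L_n$ only for unspecified large $n$, which cannot be bridged to all $n\ge 9$ by a table that stops at $n=14$ unless you make the Chebyshev constants explicit.
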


\begin{remark}
Obviously, {\bf conjecture} (\ref{conj2}) implies {\bf conjecture} (\ref{conj1}).
{\bf Table \ref{table}} shows that $\frac{M_n}{L_n}$ grows slowly and {\bf conjecture} \ref{conj2} may stand. More clearly, {\bf Table \ref{table}} shows that $\frac{M_n}{C_n}$ seems to grow quite quickly and let think that {\bf conjecture} \ref{conj3} stands. As $C_n=M_n+L_n$, {\bf conjectures} (\ref{conj2}), (\ref{conj3}) and (\ref{conj4}) are clearly equivalent. It is well known and easy to prove that $\lim_{n\to+\infty}\frac{C_{n+1}}{C_n}=4$. {\bf Table \ref{table}} shows that the similar limits given in {\bf conjecture} (\ref{conj5}) look very realistic as well.
\end{remark}

\section{Acknowledgements}
We would like to thank Fr\'ed\'eric Blanchard and Rupert Wei Tze Yu for their constant help. Our deepest gratitude goes to the anonymous referee who gave us directions to improve this paper. Thank you Jeffrey Shallit for your great patience during the reviewing process of this paper.

%\newpage

\bigskip
\hrule
\bigskip

\noindent 2020 {\it Mathematics Subject Classification}: Primary 05A18.

\noindent \emph{Keywords:} noncrossing partition.

\bigskip
\hrule
\bigskip

\noindent (Concerned with sequences
\seqnum{A363448} and \seqnum{A363449}.)

\end{document}